\documentclass[10pt,conference]{IEEEtran}
\IEEEoverridecommandlockouts
\usepackage{needspace, amsthm}
\makeatletter
\def\ps@headings{%
\def\@oddhead{\mbox{}\scriptsize\rightmark \hfil \thepage}%
\def\@evenhead{\scriptsize\thepage \hfil \leftmark\mbox{}}%
\def\@oddfoot{}%
\def\@evenfoot{}}
\makeatother
\pagestyle{empty}

\usepackage{amsfonts, amsmath}
\usepackage{url}
\usepackage{subfigure}
\usepackage{wrapfig}
\newlength{\smpagewidth}
\newlength{\smpageheight}

\setlength{\smpagewidth}{8.5in}
\setlength{\smpageheight}{11in}

\newcommand{\setleftmargin}[1]{
	\addtolength{\textwidth}{\oddsidemargin}
	\addtolength{\textwidth}{1in}
	\addtolength{\textwidth}{-#1}
	\setlength{\oddsidemargin}{-1in}
	\addtolength{\oddsidemargin}{#1}
	\setlength{\evensidemargin}{\oddsidemargin}
}
\newcommand{\setrightmargin}[1]{
	\setlength{\textwidth}{\smpagewidth}
	\addtolength{\textwidth}{-\oddsidemargin}
	\addtolength{\textwidth}{-1in}
	\addtolength{\textwidth}{-#1}
}
\newcommand{\settopmargin}[1]{
	\addtolength{\textheight}{\topmargin}
	\addtolength{\textheight}{1in}
	\addtolength{\textheight}{\headheight}
	\addtolength{\textheight}{\headsep}
	\addtolength{\textheight}{-#1}
	\setlength{\topmargin}{-1in}
	\addtolength{\topmargin}{-\headheight}
	\addtolength{\topmargin}{-\headsep}
	\addtolength{\topmargin}{#1}
}
\newcommand{\setbottommargin}[1]{
	\setlength{\textheight}{\smpageheight}
	\addtolength{\textheight}{-\topmargin}
	\addtolength{\textheight}{-1in}
	\addtolength{\textheight}{-\footskip}
	\addtolength{\textheight}{-#1}
}

\usepackage{amsfonts}
\usepackage{amssymb}
\usepackage{acronym}
\usepackage{color}

\usepackage{graphicx}
\usepackage{color}

\newcommand{\field}[1]{\mathbb{#1}}

\def\l{\lambda}

\def\e{\epsilon}
\def\g{\gamma}

\def\a{\alpha}

\def\rn{\mathbb{R}^n}

\def\re{\mathbb{R}}

\newtheorem{assumption}{Assumption}

\newtheorem{lemma}{Lemma}

\newtheorem{proposition}{Proposition}

\renewenvironment{proof}[1]{\needspace{1\baselineskip}\noindent {\bf Proof #1:} } {\hfill $\blacksquare$ \medskip}{}

\setleftmargin{.72in}
\setrightmargin{.72in}
\settopmargin{.75in}
\setbottommargin{.77in}

\begin{document}

%
%

\title{\vspace{.1in}
\huge Accelerated Dual Descent for Network Optimization\thanks{This research is supported by Army Research Lab MAST
Collaborative Technology Alliance, AFOSR complex networks program, ARO P-57920-NS, NSF CAREER CCF-0952867, and NSF CCF-1017454, ONR MURI N000140810747 and NSF-ECS-0347285.}}

\author{Michael Zargham$^{\dagger}$, Alejandro Ribeiro$^{\dagger}$, Asuman Ozdaglar$^\ddagger$, Ali Jadbabaie$^{\dagger}$ \thanks{$^{\dagger}$Michael Zargham, Alejandro Ribeiro and Ali Jadbabaie are with the Department of Electrical and Systems Engineering, University of Pennsylvania.}
\thanks{$^\ddagger$ Asuman Ozdaglar is with the Department of Electrical Engineering and Computer
Science, Massachusetts Institute of Technology. }}

\maketitle

\thispagestyle{empty}

\begin{abstract}
Dual descent methods are commonly used to solve network optimization problems because their implementation can be distributed through the network. However, their convergence rates are typically very slow. This paper introduces a family of dual descent algorithms that use approximate Newton directions to accelerate the convergence rate of conventional dual descent. These approximate directions can be computed using local information exchanges thereby retaining the benefits of distributed implementations. The approximate Newton directions are obtained through matrix splitting techniques and sparse Taylor approximations of the inverse Hessian. We show that, similarly to conventional Newton methods, the proposed algorithm exhibits superlinear convergence within a neighborhood of the optimal value. Numerical analysis corroborates that convergence times are between one to two orders of magnitude faster than existing distributed optimization methods. A connection with recent developments that use consensus iterations to compute approximate Newton directions is also presented.
\end{abstract}

\section{Introduction}

Conventional approaches to network optimization are based on subgradient descent in either the primal or dual domain; see, e.g., \cite{mung, kelly, low, Srikant}. For many classes of problems, subgradient descent algorithms yield iterations that can be implemented through distributed updates based on local information exchanges. However, practical applicability of the resulting algorithms is limited by exceedingly slow convergence rates. To overcome this limitation second order Newton methods could be used, but this would require the computation of Newton steps which cannot be accomplished through local information exchanges. This issue is solved in this paper through the introduction of a family of approximations to the Newton step.

The particular problem we consider is the network flow problem. Network connectivity is modeled as a directed graph and the goal of the network is to support a single information flow specified by incoming rates at an arbitrary number of sources and outgoing rates at an arbitrary number of sinks. Each edge of the network is associated with a concave function that determines the cost of traversing that edge as a function of flow units transmitted across the link. Our objective is to find the optimal flows over all links. Optimal flows can be found by solving a concave optimization problem with linear equality constraints (Section II). In particular, the use of subgradient descent in the dual domain allows the development of a distributed iterative algorithm. In this distributed implementation nodes keep track of variables associated with their outgoing edges and undertake updates based on their local variables and variables available at adjacent nodes (Section II-A). Distributed implementation is appealing because it avoids the cost and fragility of collecting all information at a centralized location. However, due to low convergence rates of subgradient descent algorithms, the number of iterations necessary to find optimal flows is typically very large \cite{averagepaper, CISS-rate}. The natural alternative is the use of second order Newton's methods, but they cannot be implemented in a distributed manner (Section II-B).

Indeed, implementation of Newton's method necessitates computation of the inverse of the dual Hessian and a distributed implementation would require each node to have access to a corresponding row. It is not difficult to see that the dual Hessian is in fact a weighted version of the network's Laplacian and that as a consequence its rows could be locally computed through information exchanges with neighboring nodes. Its inversion, however, requires global information. Our insight is to consider a Taylor's expansion of the inverse Hessian, which, being a polynomial with the Hessian matrix as variable, can be implemented through local information exchanges. More precisely, considering only the zeroth order term in the Taylor's expansion yields an approximation to the Hessian inverse based on local information only -- which, incidentally, coincides with the method of Hessian diagonal inverses proposed in \cite{lowdiag}. The first order approximation necessitates information available at neighboring nodes and in general, the $N$th order approximation necessitates information from nodes located $N$ hops away (Section III). The resultant family of algorithms, denoted ADD-N permits a tradeoff between accurate Hessian approximation and communication cost. Despite the fact that the proposed distributed algorithms rely on approximate Newton directions, we show that they exhibit local quadratic convergence as their centralized counterparts (Section IV). An approximate backtracking line search is added to the basic algorithm to ensure global convergence (Section IV-B).

Newton-type methods for distributed network optimization have been recently proposed in \cite{lowdiag, cdc09, wei}. While specifics differ, these papers rely on consensus iterations to compute approximate Newton directions. Quite surprisingly, it is possible to show that the methods in \cite{lowdiag, cdc09, wei} and the ADD algorithm proposed here are equivalent under some conditions (Section V). Numerical experiments study the communication cost of ADD relative to \cite{lowdiag, cdc09, wei} and to conventional subgradient descent. ADD reduces this cost by one order of magnitude with respect to \cite{lowdiag, cdc09, wei} and by two orders of magnitude with respect to subgradient descent (Section VI).

\section{The Network Optimization Problem}\label{mincost-Newton}

Consider a network represented by a directed graph ${\cal G}=({\cal N},{\cal E})$ with node set ${\cal N}=\{1,\ldots,n\}$, and edge set ${\cal E} = \{1,\ldots,E\}$.  The $i$th component of vector $x$ is denoted as $x^{i}$. The notation $x\ge 0$ means that all components $x^i\ge 0$. The network is deployed to support a single information flow specified by incoming rates $b^{i}>0$ at source nodes and outgoing rates $b^{i}<0$ at sink nodes. Rate requirements are collected in a vector $b$, which to ensure problem feasibility has to satisfy $\sum_{i=1}^{n}b^{i}=1$. Our goal is to determine a flow vector $x=[x^e]_{e\in {\cal E}}$, with $x^e$ denoting the amount of flow on edge $e=(i,j)$. We require the some additional notation; the transpose of vector $x$ is $x'$, the inner product of $x$ and $y$ is $x'y$, and the Euclidean norm of $x$ is $\|x\|:=\sqrt{x'x}$.

Flow conservation implies that it must be $Ax=b$, with $A$ the $n\times E$ node-edge incidence matrix defined as

\[ [A]_{ij} = \left\{
\begin{array}{ll}
 1 & \hbox{if edge $j$ leaves node $i$} , \\
-1 & \hbox{if edge $j$ enters node $i$},  \\
 0 & \hbox{otherwise.}
\end{array}\right.\]
The element in the $i$th row and $j$th column of a matrix $A$ is written as $[A]_{ij}$. The transpose of $A$ is denoted as $A'$.  We define the reward as the negative of scalar cost function $\phi_e(x^{e})$ denoting the cost of $x^e$ units of flow traversing edge $e$. We assume that the cost functions $\phi_e$ are strictly convex and twice continuously differentiable. The max reward network optimization problem is then defined as

\begin{equation}
	\hbox{maximize } -f(x)=\sum_{e=1}^E -\phi_e(x^e),\quad \hbox{subject to: } Ax=b.\label{optnet}\\
\end{equation}
Our goal is to investigate Newton-type iterative distributed methods for solving the optimization problem in (\ref{optnet}). Before doing that, let us discuss the workhorse distributed solution based on dual subgradient descent (Section II-A) and the conventional centralized Newton's method (Section II-B).

\subsection{Dual Subgradient Method}

Dual subgradient descent solves (\ref{optnet}) by descending in the dual domain. Start then by defining the Lagrangian function of problem (\ref{optnet}) as ${\cal L} (x,\l) = -\sum_{e=1}^E \phi_e(x^e) +\l'(Ax-b)$ and the dual function $q(\l)$ as

\begin{eqnarray} \label{eqn_separable_dual}
	q(\l)  \!\!&=&\!\! \sup_{x\in \re^E} {\cal L} (x,\l)=\sup_{x\in \re^E}
 		               \left(-\sum_{e=1}^E \phi_e(x^e) +\l'Ax\right) - \l'b \nonumber\\
		       &=&\!\! \sum_{e=1}^E \sup_{x^e \in \re} \Big(-\phi_e(x^e) + (\l'A)^e x^e\Big) - \l'b,
\end{eqnarray}
where in the last equality we wrote $\l'Ax = \sum_{e=1}^{E}(\l'A)^e x^e$ and exchanged the order of the sum and supremum operators.

It can be seen from (\ref{eqn_separable_dual}) that the evaluation of the dual function $q(\l)$ decomposes into $E$ one-dimensional optimization problems that appear in the sum. We assume that each of these problems has an optimal solution, which is unique because of the strict convexity of the functions $\phi_e$. Denote this unique solution as $x^e(\l)$ and use the first order optimality conditions for these problems in order to write
\begin{equation}
	x^e(\l) = (\phi_e')^{-1} (\l^i-\l^j),\label{primal-sol}
\end{equation}
where $i\in{\cal N}$ and $j\in{\cal N}$ respectively denote the source and destination nodes of edge $e=(i,j)$. As per (\ref{primal-sol}) the evaluation of $x^e(\l)$ for each node $e$ is based on local information
about the edge cost function $\phi^e$ and the dual variables of the incident nodes $i$ and $j$.

The dual problem of (\ref{optnet}) is defined as $\min_{\l\in \re^n} q(\l)$. The dual function is convex, because all dual functions of minimization problems are, and differentiable, because the $\phi_e$ functions are strictly convex. Therefore, the dual problem can be solved using gradient descent.  Consider an iteration index $k$, an arbitrary initial vector $\l_0$ and define iterates $\l_k$ generated by the following:

\begin{equation}
	\l_{k+1} = \l_k - \a_k g_k\qquad \hbox{for all }k\ge 0, \label{sgit}
\end{equation}
where $g_k = g(\l_{k})=\nabla q(\l_{k})$ denotes the gradient of the dual function $q(\l)$ at $\l=\l_k$. A first important observation here is that we can compute the gradient as $g_k = A x(\l_k)-b$ with the vector $x(\l_k)$ having components $x^e(\l_k)$ as determined by (\ref{primal-sol}) with $\l=\l_k$,\cite[Section 6.4]{nlp}. Differentiability of $g(\lambda)$ follows from strict convexity of (\ref{optnet}).  A second important observation is that because of the sparsity pattern of the node-edge incidence matrix $A$ the $i$th element $g_k^{i}$ of the gradient $g_{k}$ can be computed as

\begin{equation}\label{eqn_dual_update_distributed}
	g_k^{i} = \sum_{e =(i,j)} x^e(\l_{k}) -  \sum_{e =(j,i)} x^e(\l_{k})  - b_{i}
\end{equation}
The algorithm in (\ref{sgit}) lends itself to distributed implementation. Each node $i$ maintains information about its dual iterates $\l^i_{k}$ and primal iterates $x^e(\l_{k})$ of outgoing edges $e=(i,j)$.
Gradient components $g_k^{i}$ are evaluated as per (\ref{eqn_dual_update_distributed}) using local primal iterates $x^e(\l_{k})$ for $e =(i,j)$ and primal iterates of neighboring nodes $x^e(\l_{k})$ for $e=(j,i)$. Dual variables are then updated as per (\ref{sgit}). Having updated the dual iterates, we proceed to update primal variables as per (\ref{primal-sol}). This update necessitates local multipliers $\l^{i}_{k}$ and neighboring multipliers $\l_{k}^{j}$.

Distributed implementation is appealing because it avoids the cost and fragility of collecting all information at a centralized location. However, practical applicability of gradient descent algorithms is hindered by slow convergence rates; see e.g., \cite{averagepaper,CISS-rate}. This motivates consideration of Newton's method which we describe next.

\subsection{Newton's Method for Dual Descent}

Newton's Method is a descent algorithm along a scaled version of the gradient. In lieu of (\ref{sgit}) iterates are given by
\begin{equation}\l_{k+1} = \l_k + \a_k d_k\qquad \hbox{for all }k\ge 0, \label{newtup}\end{equation}
where $d_k$ is the Newton direction at iteration $k$ and $\alpha_{k}$ is a properly selected step size. The Newton direction, $d_k$ satisfies
\begin{equation}
	H_k d_k = -g_k, \label{newt}
\end{equation}
where $H_k=H(\l_{k})=\nabla^{2}q(\l_{k})$ is the Hessian of the dual function at the current iterate and $g_k=g_{k}(\l_{k})$ is, we recall, the corresponding gradient.

To obtain an expression for the dual Hessian, consider given dual $\l_k$ and primal $x_k=x(\lambda_k)$ variables, and consider the second order approximation of the primal objective centered at the current primal iterates $x_{k}$,
\begin{equation} \label{eqn_primal_quadratic_approximation}
	\hat f(y) = f(x_k)+ \nabla f(x_k)'(y-x_k) + \frac{1}{2} (y-x_k)'\nabla^2 f(x_k) (y-x_k)\end{equation}
The primal optimization problem in (\ref{optnet}) is now replaced by the maximization of the approximating function $-\hat f(y)$ in (\ref{eqn_primal_quadratic_approximation}) subject to the constraint $Ay=b$. This approximated problem is a quadratic program whose dual is the (also quadratic) program
\begin{equation}\label{eqn_dual_quadratic_approximation}
	\min_{\l\in \re^n} g(\lambda_k) =
	\min_{\l\in \re^n}	  \frac{1}{2}\lambda_k' A\nabla^2 f(x_k)^{-1} A' \lambda_k + p'\lambda_k + r.
\end{equation}
The vector $p$ and the constant $r$ can be expressed in closed form as functions of $\nabla f(x_k)$ and $\nabla^2 f(x_k)$, but they are irrelevant for the discussion here. The important consequence of (\ref{eqn_dual_quadratic_approximation}) is that the dual Hessian is given by
\begin{equation}
	H_k=Q=A\left(- \nabla^2f(x_k)^{-1}\right) A'.
\end{equation}
From the definition of $f(x)$ in (\ref{optnet}) it follows that the primal Hessian -$\nabla^2 f(x_k)$ is a diagonal matrix, which is negative definite by strict convexity of $f(x)$. Therefore, its inverse exists and can be computed locally. Further observe that the dual Hessian, being the product of the incidence matrix times a positive definite diagonal matrix times the incidence matrix transpose, is a weighted version of the network graph's Laplacian. As a particular consequence it follows that $\mathbf{1}$ is an eigenvector of $H_{k}$ associated with eigenvalue $0$ and that $H_{k}$ is invertible on the the subspace $\mathbf{1}^\perp$. Since the gradient $g_k$ lies in $\mathbf{1}^\perp$ we can find the Newton step as $d_k = -H_k^\dagger g_k$. However, computation of the pseudoinverse $H_k^\dagger$ requires global information. We are therefore interested in approximations of the Newton direction requiring local information only.

\section{Approximate Newton's Method}
To define an approximate Newton direction, i.e., one for which (\ref{newt}) is approximately true, we will consider a finite number of terms of a suitable Taylor's expansion representation of the Newton direction.
At iteration $k$, split the Hessian into diagonal elements $D_{k}$ and off diagonal elements $B_{k}$ and write $H_k=D_k-B_k$. Further rewrite the Hessian as $H_k=D_k^{-\frac{1}{2}}\left(I-D_k^{-\frac{1}{2}}B_kD_k^{-\frac{1}{2}}\right)D_k^{-\frac{1}{2}}$, which implies that the Hessian pseudo-inverse is given by $H_k^{-\dagger}=D_k^{-\frac{1}{2}}\left(I-D_k^{-\frac{1}{2}}B_kD_k^{-\frac{1}{2}}\right)^{-\dagger}D_k^{-\frac{1}{2}}$. Notice now that for the central term of this product we can use the Taylor's expansion identity $(I-X)^{\dagger}v = \left(\sum_{i=0}^\infty X^i\right)v$, which is valid for any vector $v$ orthogonal to the eigenvectors of $X$ associated with eigenvalue 1. Since $g_k$ is orthogonal to ${\bf 1}$, it follows

\begin{eqnarray*}
    d_k = -H_k^\dagger g_k
    		= -\sum_{i=0}^\infty D_k^{-\frac{1}{2}}\left(D_k^{-\frac{1}{2}} B_k D_k^{-\frac{1}{2}}\right)^i D_k^{-\frac{1}{2}}g_k .
\end{eqnarray*}
The Newton direction is now represented as an infinite sum we can define a family of approximations characterized by truncations of this sum,
\begin{equation} \label{dir}
	d_k^{(N)} \!= -\! \sum_{i=0}^N D_k^{-\frac{1}{2}}\!\left(D_k^{-\frac{1}{2}} B_k D_k^{-\frac{1}{2}}\right)^i\!D_k^{-\frac{1}{2}}\!g_k
			   := - \bar H_{k}^{(N)} g_k,
\end{equation}
where we have defined the approximate Hessian pseudo inverse $\bar H_{k}^{(N)}:=\sum_{i=0}^N D_k^{-\frac{1}{2}}\left(D_k^{-\frac{1}{2}} B_k D_k^{-\frac{1}{2}}\right)^i D_k^{-\frac{1}{2}}$. The approximate Newton algorithm is obtained by replacing the Newton step $d_{k}$ in (\ref{newtup}) by its approximations $d_k^{(N)}=- \bar H_{k}^{(N)} g_k$. The resultant algorithm is characterized by the iteration
\begin{equation}
	\l_{k+1} = \l_k - \a_k \bar H_{k}^{(N)} g_k \label{update}.
\end{equation}
While not obvious, the choice of $N$ in (\ref{dir}) dictates how much information node $i$ needs from the network in order to compute the $i$th element of the approximate Newton direction $d_k^{(N)}$ -- recall that node $i$ is associated with dual variable $\l_{k}^{i}$.

For the zeroth order approximation $d_k^{(0)}$ only the first term of the sum in (\ref{dir}) is considered and it therefore suffices to have access to the information in $D_{k}$ to compute the approximate Newton step. Notice that the approximation in this case reduces to $d_k^{(0)}= D_k^{-1} g_k$ implying that we approximate $H_{k}^{-1}$ by the inverse diagonals which coincides with the method in \cite{lowdiag}.

The first order approximation $d_k^{(1)}$ uses the first two terms of the sum in (\ref{dir}) yielding $d_k^{(1)} = \left(D_k^{-1}+ D_k^{-1} B_k D_k^{-1}\right)g_{k}$. The key observation here is that the sparsity pattern of $B_{k}$, and as a consequence the sparsity pattern of $D_k^{-1} B_k D_k^{-1}$, is that of the graph Laplacian, which means that $[D_k^{-1} B_k D_k^{-1}]_{ij}\neq0$ if and only if $i$ and $j$ correspond to an edge in the graph, i.e, $(i,j)\in E$. As a consequence, to compute the $i$th element of $d_k^{(1)}$ node $i$ needs to collect information that is either locally available or available at nodes that share an edge with $i$.

For the second order approximation $d_k^{(2)}$ we add the term $\left(D_k^{-1} B_k\right)^2 D_k^{-1}$ to the approximation $d_k^{(1)}$. The sparsity pattern of $\left(D_k^{-1} B_k\right)^2 D_k^{-1}$ is that of $B_k^{2}$, which is easy to realize has nonzero entries matching the 2-hop neighborhoods of each node. Therefore, to compute the $i$th element of $d_k^{(2)}$ node $i$ requires access to information from neighboring nodes and from neighbors of these neighbors. In general, the $N$th order approximation adds a term of the form  $\left(D_k^{-1} B_k\right)^N D_k^{-1}$ to the $N-1$st order approximation. The sparsity pattern of this term is that of $B_k^{N}$, which coincides with the $N$-hop neighborhood, and computation of the local elements of the Newton step necessitates information from $N$ hops away.

We thus interpret (\ref{dir}) as a family of approximations indexed by $N$ that yields Hessian approximations requiring information from $N$-hop neighbors in the network. This family of methods offers a trade off between communication cost and precision of the Newton direction. We analyze convergence properties of these methods in the coming sections.

\subsection{Convergence}

A basic guarantee for any iterative optimization algorithm is to show that it eventually approaches a neighborhood of the optimal solution. This is not immediate for ADD as defined by (\ref{update}) because the errors in the $\bar H_{k}^{(N)}$ approximations to $H_{k}^{\dagger}$ may be significant. Notwithstanding, it is possible to prove that the $\bar H_{k}^{(N)}$ approximations are positive definite for all $N$ and from there to conclude that the $\l_{k}$ iterates in (\ref{update}) eventually approach a neighborhood of the optimal $\l^{*}$. This claim is stated and proved in the next proposition.

\begin{proposition} \label{prop_basic_convergence}
Let $\l^{*}$ denote the optimal argument of the dual function $q(\l)$ of the optimization problem in (\ref{optnet}) and consider the ADD-$N$ algorithm characterized by iteration (\ref{update}) with $\bar H_{k}^{(N)}$ as in (\ref{dir}). Assume $\alpha_{k}=\alpha$ for all $k$ and that the network graph is not bipartite. Then, for all sufficiently small $\alpha$,
\begin{equation}
	\lim_{k\to\infty} \l_{k} = \l^{*}
\end{equation}\end{proposition}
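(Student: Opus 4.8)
The plan is to show that every approximate inverse Hessian $\bar H_{k}^{(N)}$ is symmetric positive definite with a spectrum bounded uniformly away from $0$ and $\infty$, and then to run a standard descent argument: with a sufficiently small constant step the dual objective $q(\l_k)$ decreases monotonically, its increments are summable, the gradients $g_k$ vanish, and convexity of $q$ transverse to $\mathbf{1}$ forces $\l_k\to\l^{*}$. I would first record two structural facts. Since $\mathbf{1}'A=0$ and $\mathbf{1}'b=0$, the gradient $g_k=Ax(\l_k)-b$ always lies in $\mathbf{1}^\perp$, and $q$ is invariant along $\mathbf{1}$; thus the minimizer is the coset $\l^{*}+\mathrm{span}(\mathbf{1})$ and I read the claim as convergence modulo this null direction (equivalently, convergence of the $\mathbf{1}^\perp$-component, which is self-contained because $g_k$, $D_k$ and $B_k$ depend on $\l_k$ only through that component). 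Writing the symmetric matrix $M_k:=D_k^{-\frac12}B_kD_k^{-\frac12}$, a short manipulation puts the approximation in the closed form $\bar H_{k}^{(N)}=D_k^{-\frac12}\big(\sum_{i=0}^N M_k^i\big)D_k^{-\frac12}$.

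The heart of the argument is positive definiteness. Because $H_k=D_k-B_k$ is a weighted graph Laplacian with nonnegative off-diagonal weights, $B_k\mathbf{1}=D_k\mathbf{1}$, so $D_k^{\frac12}\mathbf{1}$ is an eigenvector of $M_k$ with eigenvalue $1$; moreover $M_k$ is a normalized nonnegative adjacency matrix, so for a connected graph all its eigenvalues lie in $[-1,1]$, the value $1$ is confined to the $D_k^{\frac12}\mathbf{1}$ direction, and -- crucially -- the value $-1$ is attained \emph{only} when the graph is bipartite. Hence non-bipartiteness gives $\mathrm{spec}(M_k)\subset(-1,1]$. For any such eigenvalue $\mu$ the corresponding eigenvalue of $\sum_{i=0}^N M_k^i$ equals $\frac{1-\mu^{N+1}}{1-\mu}$ (and $N+1$ at $\mu=1$), which is strictly positive throughout $(-1,1]$ but would vanish at $\mu=-1$ for odd $N$ -- exactly the situation excluded by the hypothesis. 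Therefore $\sum_{i=0}^N M_k^i\succ 0$, and $\bar H_{k}^{(N)}\succ 0$ by congruence.

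With definiteness in hand I would extract uniform spectral bounds $0<m\le\lambda_{\min}(\bar H_{k}^{(N)})$ and $\lambda_{\max}(\bar H_{k}^{(N)})\le M$, together with a Lipschitz constant $L$ for $\nabla q$; all three follow from uniform upper and lower bounds on the diagonal primal Hessian $\nabla^2 f$ along the trajectory, the iterates remaining in a sublevel set of $q$ on which each $\phi_e''$ is bounded above and below away from $0$. Using $g_k\in\mathbf{1}^\perp$, the descent lemma then gives
\[
q(\l_{k+1})\le q(\l_k)-\a\, g_k'\bar H_{k}^{(N)}g_k+\tfrac{L\a^2}{2}\|\bar H_{k}^{(N)}g_k\|^2\le q(\l_k)-\a\Big(m-\tfrac{L\a M^2}{2}\Big)\|g_k\|^2 .
\]
For $\a<2m/(LM^2)$ the bracketed factor is a positive constant $c$, so $q(\l_k)$ is nonincreasing and bounded below by $q(\l^{*})$; telescoping yields $\sum_k\|g_k\|^2<\infty$, hence $g_k\to 0$. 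Strong convexity of $q$ on $\mathbf{1}^\perp$ then upgrades $g_k\to 0$ to $\l_k\to\l^{*}$ modulo $\mathbf{1}$.

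I expect the spectral step to be the main obstacle: establishing $\mathrm{spec}(M_k)\subset(-1,1]$ and verifying that every partial geometric sum $\sum_{i=0}^N\mu^i$ stays strictly positive there, since this is precisely where the non-bipartite hypothesis enters and where an odd truncation order could otherwise destroy definiteness. A secondary technical point is securing the \emph{uniform} constants $m,M,L$ by confining the iterates to a compact sublevel set, and reconciling the claim with the fact that $\bar H_{k}^{(N)}g_k$ need not lie in $\mathbf{1}^\perp$, so the drift of $\l_k$ along $\mathbf{1}$ must be shown harmless for convergence in the transverse directions.
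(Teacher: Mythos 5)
Your proposal is correct and follows essentially the same route as the paper: both arguments reduce to showing $\bar H_{k}^{(N)}=D_k^{-\frac12}\big(\sum_{i=0}^N (I-L_k)^i\big)D_k^{-\frac12}$ is positive definite by placing the spectrum of $I-L_k$ in $(-1,1]$ via non-bipartiteness and evaluating the truncated geometric series, and then invoking a standard descent argument for a small constant step. The only difference is one of completeness: the paper delegates the descent step to a cited Descent Lemma, whereas you write it out and also flag the genuine technical points the paper passes over (uniform spectral bounds on a sublevel set, and the harmless drift of $\l_k$ along $\mathbf{1}$ since the minimizer is only unique modulo $\mathrm{span}(\mathbf{1})$).
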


\begin{proof}{}
As per the Descent Lemma, to prove convergence of the $\l_{k}$ iterates in (\ref{update}) it suffices to show that the matrix $\bar H_k^{(N)}$ is positive definite for all $k$ \cite[Proposition A.24]{nlp}.

To do so, begin by recalling that the dual Hessian, $H_k$ is a weighted Laplacian and define the normalized Laplacian $L_k=D_k^{-\frac{1}{2}}H_k D_k^{-\frac{1}{2}}$, having unit diagonal elements. Applying the splitting $H_{k}=D_{k}-B_{k}$ it follows $D_k^{-\frac{1}{2}}B_kD_k^{-\frac{1}{2}}= I-L_k$ from which we can write $\bar H_{k}^{(N)}$ as [cf. (\ref{dir})]

\begin{equation}
	\bar H_{k}^{(N)}  = D_k^{-\frac{1}{2}} \left( \sum_{i=0}^N \big(I-L_{k} \big)^i\right)D_k^{-\frac{1}{2}}.
\end{equation}
Focus now on the sum $\sum_{i=0}^N \big(I-L_{k} \big)$. For non-bipartite graphs, normalized Laplacians have eigenvalues in the interval $\left[0,2\right)$, \cite[Lemma 1.7]{fanbook}.  Therefore, it follows that the eigenvalues of $I-L_k$, fall in $\left(-1,1\right]$. Furthermore, the normalized Laplacian $L_k$ has exactly one eigenvector, $\nu_0$ associated with the eigenvalue $0$. Observe that since $L_{k}\nu_{0}=0$, $\nu_{0}$ satisfies
\[\nu_0' \left(\sum_{i=0}^N (I-L_k)^i\right) \nu_0 = (N+1) \nu_0'\nu_0>0.\]
The other eigenvectors of the normalized Laplacian necessarily lie in $\left(-1,1\right)$. Suppose $\mu \in (-1,1)$ is one such eigenvalue of $I-L_k$, associated with eigenvector $\nu$. It follows that $\nu$ is an eigenvector of $\sum_{i=0}^N (I-L_k)^i$, whose associated eigenvalue is $\nu$ is $(1-\mu^{N+1})/(1-\mu)>0$ as follows from the sum of the truncated geometric series. Since the latter value is positive for any $\mu\in (-1,1)$ it follows that $\sum_{i=0}^N (I-L_k)^i$ is positive definite. Further observe that it is also symmetric by definition so we can define $\sum_{i=0}^N (I-L_k)^i = C'C$
where $C$ is square and full rank, \cite[Theorem 7.2.10]{HJ}. We then construct $\bar C = C D_k^{-\frac{1}{2}}$ which is also square and full rank.  This gives us a new symmetric positive definite matrix $\bar C' \bar C=D_k^{-\frac{1}{2}}\sum_{n=0}^N (I-L_k)^n D_k^{-\frac{1}{2}} = \bar H_{k}^{(N)},$ thus completing the proof.
\end{proof}

By continuity of (\ref{primal-sol}), convergence of the dual variable to an error neighborhood implies convergence of the primal variables to an error neighborhood.  Requiring the graph to {\it not} be bipartite is a technical condition to avoid instabilities created by Laplacian eigenvalues $-1$. The restriction is not significant in practice.

\section{Convergence Rate}

The basic guarantee in Proposition \ref{prop_basic_convergence} is not stronger than convergence results for regular gradient descent. Our goal is to show that the approximate Newton method in (\ref{update}) exhibits quadratic convergence in a sense similar to centralized (exact) Newton algorithms. Specifically, we will show that selecting $N$ large enough, it is possible to find a neighborhood of $\l^{*}$ such that if the iteration is started within that neighborhood iterates converge quadratically.

Before introducing this result let us define the Newton approximation error $\e_k$ as
\vspace{-.1in}
\begin{equation}
	\e_k = H_k d_k^{(N)} +g_k. \label{approxNewdir}
\end{equation}
We further introduce the following standard assumptions to bound the rate of change in the Hessian of the dual function.

\begin{assumption} \label{residual}
The Hessian $H(\l)$ of the dual function $q(\l)$ satisfies the following conditions 
\begin{list}{}{\setlength{\itemsep  }{2pt} \setlength{\parsep    }{2pt}
                                           \setlength{\parskip  }{0in} \setlength{\topsep    }{2pt}
                                           \setlength{\partopsep}{0in} \setlength{\leftmargin}{10pt}
                                           \setlength{\labelsep }{10pt} \setlength{\labelwidth}{-0pt}}
\item[({\it Lipschitz dual Hessian})] There exists some constant $L>0$ such that
$\|H(\l)-H(\bar{\l})\| \le L\|\l-\bar{\l}\| \, \forall \l,\bar{\l}\in \rn.$
\item [({\it Strictly convex dual function})] There exists some constant $M>0$ such that $\|H(\l)^{-1}\|\le M \qquad \, \forall \l\in \rn.$
\end{list}\end{assumption}

As is usual in second order optimization methods we use the gradient norm $\| g_{k}\| = \| g(\l_{k})\|$ to measure the progress of the algorithm. The aforementioned quadratic convergence result establishes that for any graph we can always select $N$ large enough so that if an iterate $\l_{k}$ is sufficiently close to the optimal $\l^{*}$, the gradient norm $\| g_{k+m}\|$ of subsequent iterates $\l_{k+m}$ decays like $2^{2^{m}}$. This is formally stated in the following.

\begin{proposition} \label{mainresult}
Consider ADD-$N$ algorithms characterized by the iteration (\ref{update}) with $\bar H_{k}^{(N)}$ as defined in (\ref{dir}). Let Assumption \ref{residual} hold and further assume that the step size is $\alpha_{k+m}=1$ for all $k\geq m$. Let $\e$ be a uniform bound in the norm of the Newton approximation error $\e_{k}$ in (\ref{approxNewdir}) so that $\|\e_k\| \le \e$ for all $k$. Define the constant
\vspace{-.1in}\begin{equation}B=\e+M^2L \e^2.\label{sumerr}\end{equation}
Further assume that at time $k$ it holds $\|g_k\|\le 1/(2M^2 L)$ and that $N$ is chosen large enough to ensure that for some $\delta\in (0,1/2)$, $B+M^2 L B^2 \le \delta/(4 M^2 L)$. Then, for all $m\geq1$,
\begin{equation}\label{quadconv}
	\|g_{k+m}\|\le {1\over 2^{2^m}M^2 L} + B + \frac{\delta}{M^2 L}\, \frac{(2^{2^m-1}-1)}{2^{2^m}}.
\end{equation}
In particular, as $m\to\infty$ it holds
\begin{equation}
	\limsup_{m\to \infty}\|g_{k+m}\|\le B + {\delta \over 2M^2 L}.
\end{equation}\end{proposition}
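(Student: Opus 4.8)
The plan is to first establish a single-step inequality of the form $\|g_{k+1}\|\le M^2L\|g_k\|^2+B$ and then iterate it $m$ times. For the one step, observe that with $\a_k=1$ the iteration (\ref{update}) reads $\l_{k+1}=\l_k+d_k^{(N)}$, so the fundamental theorem of calculus applied to the dual gradient gives $g_{k+1}=g_k+\int_0^1 H(\l_k+t\,d_k^{(N)})\,d_k^{(N)}\,dt$. I would then substitute $H_k d_k^{(N)}=\e_k-g_k$, which is just the definition (\ref{approxNewdir}) of the approximation error rearranged; the two copies of $g_k$ cancel, leaving
\[ g_{k+1}=\e_k+\int_0^1\big(H(\l_k+t\,d_k^{(N)})-H_k\big)\,d_k^{(N)}\,dt. \]

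Taking norms, the Lipschitz condition in Assumption \ref{residual} bounds the integrand by $Lt\|d_k^{(N)}\|^2$, so $\|g_{k+1}\|\le\e+\tfrac{L}{2}\|d_k^{(N)}\|^2$. To control $\|d_k^{(N)}\|$ I would again use $H_k d_k^{(N)}=\e_k-g_k$ together with the strict-convexity bound $\|H^{-1}\|\le M$ (understood on the subspace $\mathbf 1^\perp$ in which both $g_k$ and $\e_k$ lie, as established before Proposition \ref{prop_basic_convergence}), which gives $\|d_k^{(N)}\|\le M(\|g_k\|+\e)$. Substituting and expanding the square, the resulting cross term is absorbed by the elementary inequality $2\e\|g_k\|\le\|g_k\|^2+\e^2$; everything collapses to exactly $\|g_{k+1}\|\le M^2L\|g_k\|^2+B$ with $B$ as in (\ref{sumerr}).

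The remaining task is to iterate this scalar recursion. Setting $u_m:=M^2L\|g_{k+m}\|$ and $v:=M^2LB$ turns it into $u_{m+1}\le u_m^2+v$, converts the hypothesis $\|g_k\|\le 1/(2M^2L)$ into $u_0\le 1/2$, and converts the standing condition $B+M^2LB^2\le\d/(4M^2L)$ into $v+v^2\le\d/4$. I would then prove by induction on $m\ge 1$ that $u_m\le (1-\d)2^{-2^m}+v+\tfrac{\d}{2}$; the base case $m=1$ follows from $u_1\le u_0^2+v\le\tfrac14+v$. Multiplying the inductive bound back by $M^2L$ and using $(2^{2^m-1}-1)/2^{2^m}=\tfrac12-2^{-2^m}$ recovers (\ref{quadconv}) verbatim, and sending $m\to\infty$ (the first term vanishes and the geometric-series term tends to $\d/(2M^2L)$) yields the stated $\limsup$.

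The single-step inequality is routine Newton-type bookkeeping; the genuine obstacle is the inductive step. Inserting the ansatz into $u_{m+1}\le u_m^2+v$ reduces the step to showing $F(w_m)\ge0$ for an upward-opening scalar quadratic $F$ in $w_m:=(1-\d)2^{-2^m}$. Its minimizer lies to the right of the admissible interval $[0,(1-\d)/4]$ (since $v+\tfrac{\d}{2}>\tfrac{\d}{4}$), so it suffices to check $F$ at the right endpoint $w_m=(1-\d)/4$, attained at $m=1$; the hypothesis $v+v^2\le\d/4$ is precisely what keeps $v+\tfrac{\d}{2}$ small enough that this endpoint value stays nonnegative uniformly over $\d\in(0,1/2)$. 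Verifying this uniform bound — that the accumulated Newton-approximation error never overwhelms the quadratic decay — is the crux of the argument and the point at which the calibration of the constant $\d/(4M^2L)$ is actually used.
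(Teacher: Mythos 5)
Your proposal is correct and follows essentially the same route as the paper: your single-step bound $\|g_{k+1}\|\le M^2L\|g_k\|^2+B$ is exactly the paper's Lemma \ref{basicrel} specialized to $\a_k=1$ (derived the same way, via the integral form of the descent lemma, the substitution $H_kd_k^{(N)}=\e_k-g_k$, and the bound $\|d_k^{(N)}\|\le M(\|g_k\|+\e)$), and the induction on $m$ is the paper's argument with the recursion merely normalized to $u_{m+1}\le u_m^2+v$ and the inductive step reorganized as an endpoint check of a scalar quadratic. The verification that the accumulated error term stays controlled under $v+v^2\le\delta/4$ does go through as you describe, so nothing is missing.
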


Proposition \ref{mainresult} has the same structure of local convergence results for Newton's method \cite[Section 9.5]{boydbook}. In particular, quadratic convergence follows from the term $1/\left(2^{2^m}\right)$ in (\ref{quadconv}). The remaining terms in (\ref{quadconv}) are small constants that account for the error in the approximation of the Newton step.

Notice that Proposition \ref{mainresult} assumes that at some point in the algorithm's progression, $\|g_k\|\le 1/(2M^2 L)$. Quadratic convergence is only guaranteed for subsequent iterates $\l_{k+m}$. This is not a drawback of ADD, but a characteristic of all second order descent algorithms. To ensure that some iterate $\l_{k}$ does come close to $\l^{*}$ so that $\|g_k\|\le 1/(2M^2 L)$ we use a distributed adaptation of backtracking line search (Section \ref{sec_backtracking}).

To proceed with the proof of Proposition \ref{mainresult} we need two preliminary results. The first result concerns the bound $\e$ which was required to hold uniformly for all iteration indexes $k$. While it is clear that increasing $N$ reduces $\|\e_{k}\|$, it is not immediate that a uniform bound should exist. The fact that a uniform bound does exists is claimed in the following lemma.

\begin{lemma} \label{boundprop}
Given an arbitrary $\epsilon >0$, there exists an $N$ such that the Newton approximation errors $\e_{k}$ as defined in (\ref{approxNewdir}) have uniformly bounded norms $\|\e_{k}\|\leq\e$ for all iteration indexes $k$.
\end{lemma}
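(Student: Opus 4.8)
The plan is to first reduce the error $\e_k$ to a single transparent spectral expression and then argue that it decays geometrically in $N$ at a rate controllable uniformly in $k$. Writing $H_k=D_k^{\frac12}L_kD_k^{\frac12}$ and using the form $\bar H_k^{(N)}=D_k^{-\frac12}\big(\sum_{i=0}^N(I-L_k)^i\big)D_k^{-\frac12}$ established in the proof of Proposition \ref{prop_basic_convergence}, the telescoping identity $L_k\sum_{i=0}^N(I-L_k)^i=I-(I-L_k)^{N+1}$ gives $H_k\bar H_k^{(N)}=I-D_k^{\frac12}(I-L_k)^{N+1}D_k^{-\frac12}$. Substituting into $\e_k=H_kd_k^{(N)}+g_k=(I-H_k\bar H_k^{(N)})g_k$ yields the clean formula
\[ \e_k = D_k^{\frac12}(I-L_k)^{N+1}D_k^{-\frac12}g_k. \]
Thus the error is exactly the tail of the truncated geometric series acting on $g_k$, and the whole lemma reduces to bounding the norm of this operator applied to $g_k$.

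The key observation is that the offending eigenvalue $+1$ of $I-L_k$ never contributes. Its eigenvector is $\nu_0\propto D_k^{\frac12}\mathbf{1}$ (since $H_k\mathbf{1}=0$), and $\langle\nu_0,D_k^{-\frac12}g_k\rangle\propto\mathbf{1}'g_k=0$ because $g_k\in\mathbf{1}^\perp$. Hence $D_k^{-\frac12}g_k$ lies in the invariant subspace $\nu_0^\perp$, on which — by the argument already used in Proposition \ref{prop_basic_convergence} (normalized-Laplacian eigenvalues in $(0,2)$ for a non-bipartite graph) — the eigenvalues of $I-L_k$ lie strictly inside $(-1,1)$. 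Letting $\rho_k<1$ denote the corresponding spectral radius restricted to $\nu_0^\perp$, we obtain
\[ \|\e_k\|\le\|D_k^{\frac12}\|\,\|D_k^{-\frac12}\|\,\rho_k^{N+1}\,\|g_k\|=\sqrt{\k(D_k)}\;\rho_k^{N+1}\;\|g_k\|. \]

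It remains, and this is the crux, to make the three $k$-dependent factors $\sqrt{\k(D_k)}$, $\rho_k$ and $\|g_k\|$ uniform in $k$. All three are governed by the edge weights $w_k^e=1/\phi_e''(x_k^e)$ through $H_k=A\,\mathrm{diag}(w_k)\,A'$. Because the relevant iterates $\l_k$, and hence the primal iterates $x_k=x(\l_k)$ by continuity of (\ref{primal-sol}), lie in a bounded region, strict convexity and continuity of each $\phi_e''$ confine the weights to an interval $[w_{\min},w_{\max}]$ with $0<w_{\min}\le w_{\max}<\infty$. This at once bounds $\k(D_k)$ and, via $g_k=Ax_k-b$, bounds $\|g_k\|$. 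The delicate point is $\rho_k$: I must show that over this compact family of positive weightings of the fixed non-bipartite topology the largest normalized-Laplacian eigenvalue stays bounded away from $2$ (and the smallest nonzero one away from $0$), so that $\rho_k\le\rho<1$ uniformly. This follows from continuity of the eigenvalues on the compact weight set together with the fact that no positive weighting of a fixed non-bipartite graph attains the eigenvalue $2$; an extreme-value argument on the compact set then yields the uniform gap.

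With a uniform constant $C=\sqrt{\k}\;\sup_k\|g_k\|$ and uniform rate $\rho<1$ we conclude $\|\e_k\|\le C\rho^{N+1}$ for every $k$, so any $N$ with $C\rho^{N+1}\le\e$ proves the claim. The main obstacle is exactly the uniform spectral gap $\rho<1$: the non-bipartite hypothesis only excludes the eigenvalue $2$ pointwise, and upgrading this to a bound holding simultaneously over all iterates is what forces the boundedness of the weights and the compactness argument above.
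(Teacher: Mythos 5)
Your proof is correct and follows the same skeleton as the paper's: both start from the telescoping identity that collapses the error to $\e_k=(B_kD_k^{-1})^{N+1}g_k$ (your symmetrized form $D_k^{1/2}(I-L_k)^{N+1}D_k^{-1/2}g_k$ is the same matrix), both observe that $g_k\in\mathbf{1}^\perp$ removes the unit eigenvalue from play, and both conclude geometric decay in $N$ from a spectral radius strictly below one on that subspace. The genuine divergence is in how the uniform-in-$k$ gap is obtained. The paper invokes an explicit combinatorial bound from \cite{Landau}, $\rho(B_kD_k^{-1})\le 1-1/\big(n\Delta(G)(\mathrm{diam}(G)+1)b_{\max}\big)$, which quantifies the gap in terms of the fixed topology and a uniform bound $b_{\max}$ on the off-diagonal Hessian entries; you instead run a compactness/extreme-value argument over the weight set $[w_{\min},w_{\max}]$. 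The paper's route buys an explicit, in principle computable $N$ for a given $\e$; yours is softer but self-contained, and it is more careful on one point the paper elides: since $B_kD_k^{-1}$ is not symmetric, bounding the norm of its $(N+1)$st power by $\rho^{N+1}$ really does require the similarity transform you make explicit, at the cost of the $\sqrt{\kappa(D_k)}$ factor. Both arguments ultimately rest on the same uniformity hypotheses (bounded $\|g_k\|$ and bounded Hessian weights along the trajectory), which you state openly and the paper folds into the phrase ``the assumption that $g_k$ is upper bounded for all $k$.''
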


\begin{proof}{}
We begin eliminating the summation from our expression of the Newton error by observing that a telescopic property emerges.
\begin{eqnarray*}
H_kd_k^{(N)} + g_k&=& H_k\left(-\sum_{i=0}^N\left(D_k^{-1}B_k\right)^iD_k^{-1}g_k\right)+g_k\\
&=& \left(I-(D_k-B_k)\sum_{i=0}^N\left(D_k^{-1}B_k\right)^iD_k^{-1}\right)g_k\\
&=& \left(I - \sum_{i=0}^N  (D_k^{-1}B_k)^i-(D_k^{-1}B_k)^{i+1} \right) g_k\\
&=& (B_kD_k^{-1})^{N+1}g_k
\end{eqnarray*}
We introduce the matrix $V\in \field{R}^{n \times n-1}$, made up of $n-1$ orthonormal columns spanning $\mathbf{1}^\perp$.  We observe that $VV'= I_n- \frac{\mathbf{11}'}{n}$, and since $g\in \mathbf{1}^{\perp}$ we have $g=VV'g$.  Our descent occurs in $\mathbf{1}^\perp$ so we restrict our analysis to this subspace.
We have $\Vert V'(B_kD_k^{-1})^{N+1}g_k \Vert =  \Vert V'(B_kD_k^{-1})^{N+1}VV'g_k \Vert \le \Vert V'(B_kD_k^{-1})^{N+1}V\Vert \Vert V'g_k\Vert\le \rho^{N+1} \left(B_kD_k^{-1}\right) \Vert g_k \Vert$ from the triangle inequality and the following definition.  For a matrix X, $\rho(X)$ is the radius of a disc containing all eigenvalues with subunit magnitude.  In this problem $\rho(V'(B_kD_k^{-1})V)$ coincides with the largest eigenvalue modulus and $\rho(B_kD_k^{-1})$ is the second largest eigenvalue modulus.
From \cite{Landau} we have \begin{equation}\rho\left(B_kD_k^{-1}\right)\le 1-\frac{1}{n\Delta(G)(\hbox{diam}(G)+1)b_{\hbox{max}}}\end{equation}
where $\Delta(G)$ is the maximum degree of any node in $G$, $\hbox{diam}(G)$ is the diameter of $G$ and $b_{\hbox{max}}$ is an upper bound on dual off diagonal elements of the dual hessian: $[H_k]_{ij}\le b_{\hbox{max}} \, \forall i\not = j$.  Combining this fact with the assumption that $g_k$ is upper bounded for all $k$, the result follows.
\end{proof}


Another preliminary result necessary for the proof of Proposition \ref{mainresult} is an iterative relationship between the gradient norm $\|g_{k+1}\|$ at iteration $k+1$ and the norm $\|g_k\|$ at iteration $k$. This relationship follows from a multi-dimensional extension of the descent lemma (see \cite{ourbook}) as we explain next.

\begin{lemma}{Let Assumption \ref{residual} hold.
Let $\{\l_k\}$ be a sequence generated by the method
(\ref{newtup}). For any stepsize rule $\a_k$, we have
$\|g_{k+1}\| \le (1-\a_k) \|g_k\| + M^2 L \a_k^2 \|g_k\|^2+\a_k \|\e_k\|+ M^2 L\a_k^2\|\e_k\|^2.$
}\label{basicrel}
\end{lemma}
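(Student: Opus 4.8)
The plan is to obtain the recursion from the integral form of the fundamental theorem of calculus for the dual gradient along the segment joining $\l_k$ and $\l_{k+1}$, isolate the exact Newton contribution through the definition of $\e_k$, and control the remaining curvature term with the Lipschitz bound of Assumption \ref{residual}.

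First I would apply the multidimensional descent lemma of \cite{ourbook} to the iterate $\l_{k+1}=\l_k+\a_k d_k^{(N)}$ of (\ref{newtup}), writing
\begin{equation*}
g_{k+1} = g_k + \a_k\int_0^1 H\!\left(\l_k + t\a_k d_k^{(N)}\right) d_k^{(N)}\, dt .
\end{equation*}
Adding and subtracting $H_k=H(\l_k)$ in the integrand and substituting $H_k d_k^{(N)}=\e_k-g_k$, which is merely a rearrangement of the definition (\ref{approxNewdir}) of the Newton approximation error, yields
\begin{equation*}
g_{k+1} = (1-\a_k) g_k + \a_k\e_k + \a_k\int_0^1 \left[H\!\left(\l_k + t\a_k d_k^{(N)}\right) - H_k\right] d_k^{(N)}\, dt .
\end{equation*}
Taking norms and applying the triangle inequality (for step sizes $\a_k\in(0,1]$, so that $1-\a_k\ge0$ and the first term contributes $(1-\a_k)\|g_k\|$) reduces the task to bounding the integral remainder.

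Next I would estimate that remainder. The Lipschitz condition in Assumption \ref{residual} gives $\|H(\l_k+t\a_k d_k^{(N)})-H_k\|\le L\,t\a_k\|d_k^{(N)}\|$, and since $\int_0^1 t\,dt=\tfrac12$ the remainder is bounded by $\tfrac12 L\a_k^2\|d_k^{(N)}\|^2$. To control $\|d_k^{(N)}\|$ I would use that $g_k$, $\e_k$ and $d_k^{(N)}$ all lie in $\mathbf{1}^\perp$, on which $H_k$ is invertible with $\|H_k^{-1}\|\le M$ by the strict-convexity part of Assumption \ref{residual}; then $H_k d_k^{(N)}=\e_k-g_k$ gives $\|d_k^{(N)}\|\le M(\|g_k\|+\|\e_k\|)$.

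The step requiring care, and the one that pins down the exact constants in the statement, is combining these two estimates. Squaring the bound on $\|d_k^{(N)}\|$ directly would leave a cross term $2M^2\|g_k\|\|\e_k\|$ absent from the claim; instead I would apply $(a+b)^2\le 2a^2+2b^2$ to get $\|d_k^{(N)}\|^2\le 2M^2(\|g_k\|^2+\|\e_k\|^2)$. The factor $2$ then cancels the $\tfrac12$ from $\int_0^1 t\,dt$, so the remainder is at most $M^2L\a_k^2\|g_k\|^2+M^2L\a_k^2\|\e_k\|^2$, and adding the $(1-\a_k)\|g_k\|$ and $\a_k\|\e_k\|$ terms reproduces the claimed inequality verbatim. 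The main obstacle is therefore bookkeeping rather than analysis: choosing the crude-but-matched constants so that the cross term disappears and the gradient and error contributions separate, as the recursion feeding Proposition \ref{mainresult} requires, while keeping in mind that $H_k$ is singular and must be inverted only on $\mathbf{1}^\perp$.
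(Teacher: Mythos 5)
Your proposal is correct and follows essentially the same route as the paper's own proof: the integral (Taylor/descent-lemma) representation of $g_{k+1}-g_k$, substitution of $H_k d_k^{(N)}=\e_k-g_k$ from (\ref{approxNewdir}), the Lipschitz bound $\tfrac{L}{2}\a_k^2\|d_k^{(N)}\|^2$ on the remainder, and the bound $\|d_k^{(N)}\|^2\le 2M^2(\|g_k\|^2+\|\e_k\|^2)$ whose factor of $2$ cancels the $\tfrac12$. If anything, you are slightly more careful than the paper in flagging that $1-\a_k\ge 0$ is needed for the first term and that $H_k$ is inverted only on $\mathbf{1}^\perp$.
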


\begin{proof}{}
We consider two vectors $w\in \rn$ and $z\in$. We let $\xi$ be a scalar parameter and define the function
$y(\xi)= \nabla y(w+\xi z)$. From the chain rule, it follows that $\frac{\partial}{\partial \xi} y(\xi)= H(w+\xi z) z$.
Using the Lipschitz continuity of the
residual function gradient [cf.\ Assumption \ref{residual}(a)], we
obtain:  $g(w+z)-g(w)=y(1)-y(0)=\int_{0}^1 \frac{\partial}{\partial \xi} y(\xi) d\xi=\int_0^1 H(w+\xi z) z d\xi$
\begin{eqnarray*}
&\le&  \left| \int_0^1 (H(w+\xi z)-H(w))zd\xi\right| + \int_0^1 H(w)zd\xi\\
&\le& \int_0^1 \|H(w+\xi z)-H(w)\|\, \|z\| d\xi+ H(w)z \\
&\le&  \|z\| \int_0^1 L \xi \|z\|d\xi + H(w)z = {L\over 2} \|z\|^2 + H(w)z .
\end{eqnarray*}\normalsize
We apply the preceding relation with $w=\l_k$ and $z=\a_k d_k$ and
obtain
$g(\l_k+\a_kd_k) -  g(\l_k) \le \a_k H(\l_k) d_k + {L\over 2} \a_k^2 \|d_k\|^2.$ 
By Eq.\ (\ref{approxNewdir}), we have $H(\l_k) d_k = - g(\l_k)+\e_k$.
Substituting this in the previous relation, this yields
$g(\l_k+\a_kd_k) \le (1-\a_k) g(\l_k) + \a_k \e_k + {L\over 2} \a_k^2
\|d_k\|^2.$ Moreover, using Assumption \ref{residual}(b), we have
$\|d_k\|^2 = \|H(\l_k)^{-1} (- g(\l_k) + \e_k)\|^2$
$\le \|H(\l_k)^{-1}\|^2\, \|- g(\l_k) + \e_k\|^2 \le  M^2 \Big(2 \| g(\l_k)\|^2 + 2 \|\e_k\|^2\Big)$.
Combining the above relations, we obtain
$\| g(\l_{k+1})\|\le(1-\a_k) \| g(\l_k)\| + M^2 L \a_k^2 \| g(\l_k)\|^2 +\a_k \|\e_k\|
+ M^2 L\a_k^2\|\e_k\|^2$\normalsize, establishing the desired relation.
\end{proof}

The proof of Proposition 2 follows from recursive application of the result in Lemma \ref{basicrel}. 

\begin{proof}{(Proposition \ref{mainresult})}
We show Eq.\ (\ref{quadconv}) using induction on the iteration
$m$. Using $\a_k=1$ in the statement of Lemma
\ref{basicrel}, we obtain
$\|g_{k+1}\| \le M^2 L \|g_k\|^2 +B\le {1/ 4M^2 L} + B$,
where the second inequality follows from the assumption
$\|g_k\|\le {1 / 2M^2 L}$. This establishes relation
(\ref{quadconv}) for $m=1$.
We next assume that (\ref{quadconv}) holds for some $m>0$, and show
that it also holds for $m+1$. Eq.\ (\ref{quadconv}) implies that
$\|g_{k+m}\|\le  {1/ 4M^2 L} + B+ {\delta / 4M^2L}.$
Using the assumption $B+M^2 L B^2 \le {\delta / 4 M^2 L}$, this
yields
$\|g_{k+m}\|\le  {1+2\delta / 4M^2 L} < {1/ 2M^2L},$
where the strict inequality follows from $\delta \in (0,1/2)$.
Using $\a_{k+m}=1$ in the generalized descent lemma \ref{basicrel}, we obtain
\[M^2L \|g_{k+m+1}\|\le \Big(M^2L \|g_{k+m}\|\Big)^2+B.\]\normalsize
Using Eq.\  (\ref{quadconv}), this implies that $M^2L \|g_{k+m+1}\|$
\begin{eqnarray*}
&\le& \left( {1\over 2^{2^m}} +
M^2 L B + \frac{\delta(2^{2^m-1}-1)} {2^{2^m}}\right)^2 + M^2L B\\
&=& {1\over 2^{2^{m+1}}} + {M^2LB \over 2^{2^m-1}} + \delta\,
\frac{2^{2^m-1}-1}{ 2^{2^{m+1}-1}}\\
&&+ M^2L \left(B+\frac{\delta}{M^2 L}\, \frac{(2^{2^m-1}-1)}{2^{2^m}}  \right)^2+ M^2L B.
\end{eqnarray*}
\normalsize
Using algebraic manipulations and the assumption $B+M^2 L B^2 \le
{\delta \over 4 M^2 L}$, this yields
\[\|g_{k+m+1}\|\le {1\over 2^{2^{m+1}}M^2 L} + B + \frac{\delta}{M^2 L}\,
\frac{(2^{2^{m+1}-1}-1)}{2^{2^{m+1}}},\]\normalsize completing the induction. Taking the
limit superior in Eq.\ (\ref{quadconv}) establishes the final
result.
\end{proof}

\subsection{Distributed backtracking line search}\label{sec_backtracking}

Proposition \ref{mainresult} establishes {\it local} quadratic convergence for properly selected members of the ADD family. To guarantee {\it global} convergence we modify ADD to use time varying step sizes $\a_{k}$ selected through distributed backtracking line search \cite[Algorithm 9.2]{boydbook}. Line search implementation requires computation of the gradient norm $\|g_{k}\| = \sum_{i=1}^{n} {g_{k}^{i}}^{2}$. This can be easily achieved using distributed consensus algorithms, e.g., \cite{ali}. However, since these consensus algorithms are iterative in nature, an approximate norm $\eta_{k}$ is computed in lieu of $\|g_{k}\|$. We assume that approximate gradient norms are computed with an error not exceeding a given constant $\gamma/2\geq0$,

\begin{equation}
	\Big|\eta_k - \|g_k\| \Big|\le \g /2,\label{errorstep}
\end{equation}
For fixed scalars $\sigma\in (0,1/2)$ and
$\beta\in (0,1)$, we set the stepsize $\a_k$ equal to
$\a_k=\beta^{m_k}$, where $m_k$ is the smallest nonnegative integer
that satisfies
\begin{equation}
	n_{k+1} \le (1-\sigma \beta^{m})\eta_k + B +\g.\label{inexactArmijo}
\end{equation}
The expression in (\ref{inexactArmijo}) coincides with the regular (centralized) backtracking line search except for the use of the approximate norm $\eta_k$ instead of the actual norm $\|g_{k}\|$ and the (small) additive constants $B$ and $\gamma$ respectively defined in (\ref{sumerr}) and (\ref{errorstep}).

While we introduce line search to ensure {\it global} convergence we start by showing that stepsizes selected according to the rule in (\ref{inexactArmijo}) do not affect {\it local} convergence. As we show next, this is because if $\|g_k\|\le 1/(2M^2 L)$ as required in Proposition \ref{mainresult} the rule in (\ref{inexactArmijo}) selects stepsizes $\a_{k}=1$.

\begin{proposition}\label{achoice}
If at iteration $k$ of ADD-$N$ the gradient norm satisfies $\|g_k\|\le 1/(2M^2 L)$, the inexact backtracking stepsize rule in (\ref{errorstep}) selects $\a_k=1$.
\end{proposition}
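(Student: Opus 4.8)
The plan is to show that the full step, corresponding to $m=0$, already satisfies the inexact Armijo condition (\ref{inexactArmijo}); since $0$ is the smallest nonnegative integer, the rule then returns $m_k=0$ and hence $\a_k=\beta^0=1$. Thus it suffices to verify that with step size $1$ the trial iterate $\l_{k+1}=\l_k-\bar H_k^{(N)}g_k$ produces an approximate gradient norm $\eta_{k+1}$ satisfying $\eta_{k+1}\le(1-\sigma)\eta_k+B+\gamma$, which is exactly (\ref{inexactArmijo}) evaluated at $m=0$.

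First I would bound the \emph{exact} gradient norm after a unit step. Applying Lemma \ref{basicrel} with $\a_k=1$ gives $\|g_{k+1}\|\le M^2L\|g_k\|^2+\|\e_k\|+M^2L\|\e_k\|^2$. Using the uniform bound $\|\e_k\|\le\e$ together with the definition $B=\e+M^2L\e^2$ in (\ref{sumerr}), the last two terms collapse to at most $B$, so that $\|g_{k+1}\|\le M^2L\|g_k\|^2+B$. The hypothesis $\|g_k\|\le 1/(2M^2L)$ then turns the quadratic term into a contraction, since $M^2L\|g_k\|^2=\big(M^2L\|g_k\|\big)\|g_k\|\le\tfrac12\|g_k\|$, yielding the clean estimate $\|g_{k+1}\|\le\tfrac12\|g_k\|+B$.

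Next I would pass from exact to approximate norms using the consensus error bound (\ref{errorstep}), namely $|\eta_k-\|g_k\||\le\gamma/2$. Writing $\eta_{k+1}\le\|g_{k+1}\|+\gamma/2$ and $\|g_k\|\le\eta_k+\gamma/2$ and substituting the contraction estimate gives $\eta_{k+1}\le\tfrac12\eta_k+B+\tfrac34\gamma$. The final step exploits $\sigma\in(0,1/2)$: since $1-\sigma>\tfrac12$ and $\eta_k\ge0$, I may replace $\tfrac12\eta_k$ by the larger $(1-\sigma)\eta_k$, and bound $\tfrac34\gamma\le\gamma$, so that $\eta_{k+1}\le(1-\sigma)\eta_k+B+\gamma$. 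This is precisely the acceptance condition (\ref{inexactArmijo}) at $m=0$.

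I expect the only delicate point to be the bookkeeping of the additive error terms: one must check that the two $\gamma/2$ contributions coming from (\ref{errorstep}) and the factor $\tfrac12$ produced by the contraction are all absorbed by the strict slack $(1-\sigma)-\tfrac12>0$ and by the constants $B$ and $\gamma$ that were deliberately built into the line-search rule (\ref{inexactArmijo}). Notably, no monotonicity of the Armijo condition in $m$ is required, since establishing feasibility at $m=0$ immediately identifies it as the smallest admissible nonnegative integer, forcing $\a_k=1$.
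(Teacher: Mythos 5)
Your proposal is correct and follows essentially the same route as the paper: apply Lemma \ref{basicrel} with $\a_k=1$, absorb the error terms into $B$ via (\ref{sumerr}), use the hypothesis $\|g_k\|\le 1/(2M^2L)$ to turn the quadratic term into $\tfrac{1}{2}\|g_k\|$, invoke $\sigma<1/2$, and convert exact to approximate norms via (\ref{errorstep}) to land on (\ref{inexactArmijo}) at $m=0$. The only cosmetic difference is that you pass to the approximate norms $\eta_k$ before replacing $\tfrac{1}{2}$ by $1-\sigma$ (which requires the harmless observation $\eta_k\ge 0$), whereas the paper does this replacement on the exact norm first; the bookkeeping of the $\gamma/2$ terms closes the same way in both versions.
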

\begin{proof}{}
Replacing $\a_k=1$ in Lemma
\ref{basicrel} and using the definition of the constant $B$, we
obtain
$ \|g_{k+1}\| \le M^2L \|g_k\|^2 + B \le {1\over 2} \|g_k\| + B\le  (1-\sigma) \|g_k\| + B,$ \normalsize
where to get the last inequality, we used the fact that the constant
$\sigma$ used in the inexact backtracking stepsize rule satisfies
$\sigma\in (0,1/2)$. Using the condition on $\eta_k$ [cf.\ Eq.\
(\ref{errorstep})], this yields $n_{k+1}\le (1-\sigma) \eta_k + B + \g,$
showing that the steplength $\a_k=1$ satisfies condition
(\ref{inexactArmijo}) in the inexact backtracking stepsize rule.
\end{proof}

As per Proposition \ref{achoice}, if ADD-$N$ with backtracking line search is initialized at a point at which $\|g_0\|\leq 1/(2M^2 L)$, stepsizes $\a_{k}=1$ are used. Therefore, Proposition \ref{mainresult} holds, and convergence to the optimum $\l^{*}$ is quadratic, which in practice implies convergence in a few steps. Otherwise, selecting step sizes $\a_{k}$ satisfying (\ref{inexactArmijo}) ensures a strict decrease in the norm of the residual function as proven by the proposition below.

\begin{proposition}\label{globalresult}
Consider ADD-$N$ algorithms characterized by the iteration (\ref{update}) with $\bar H_{k}^{(N)}$ as defined in (\ref{dir}). Let Assumption \ref{residual} hold and stepsizes $\a_k$ being selected according to the inexact backtracking rule in (\ref{inexactArmijo}). Further assume that $\|g_k\|> {1/ 2 M^2 L}$ and that $N$ is chosen large enough to ensure that the constants $B$ and $\gamma$ in (\ref{sumerr}) and (\ref{errorstep}) satisfy
\begin{equation}
	B +2\g\le \frac{\beta}{16M^2 L},\label{sizerr}
\end{equation}
where $\beta$ is the backtracking rule constant and $M$ and $L$ are defined in Assumption \ref{residual}. Then, the gradient norm at iteration $k+1$ decreases by at least $\beta/(16 M^2 L)$,
\begin{equation}
	\|g_{k+1}\| \le \|g_k\| -\frac{\beta}{16 M^2 L}.
\end{equation}\end{proposition}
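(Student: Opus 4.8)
The plan is to mirror the classical backtracking analysis of Newton's method \cite[Section 9.5]{boydbook}, but with the residual norm $\|g_k\|$ playing the role of the objective and with every test carried out on the inexact estimate $\eta_k$ rather than on $\|g_k\|$ itself. The engine is Lemma \ref{basicrel}; everything else is a matter of (a) certifying that the backtracking rule (\ref{inexactArmijo}) accepts a step that is not too small, and (b) translating that accepted step back into a guaranteed drop of $\|g_k\|$.

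First I would put Lemma \ref{basicrel} into a usable form. For any trial step $\alpha\le 1$ we have $\alpha^2\le\alpha$, so the two error terms combine through the definition $B=\epsilon+M^2L\epsilon^2$ in (\ref{sumerr}) and the bound $\|\epsilon_k\|\le\epsilon$ into a single contribution $\alpha B$, giving $\|g_{k+1}\|\le(1-\alpha)\|g_k\|+M^2L\alpha^2\|g_k\|^2+\alpha B$. Feeding into this any $\alpha\le(1-\sigma)/(M^2L\|g_k\|)$ and discarding the nonnegative remainder $(1-\alpha)B$ yields the \emph{true}-norm sufficient-decrease inequality $\|g_{k+1}\|\le(1-\sigma\alpha)\|g_k\|+B$. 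The error bound (\ref{errorstep}) then lets me pass to estimates: since $\eta_{k+1}\le\|g_{k+1}\|+\gamma/2$, $\|g_k\|\le\eta_k+\gamma/2$ and $1-\sigma\alpha\le1$, this implies exactly the algorithmic test (\ref{inexactArmijo}). Hence every sufficiently small trial step passes, the line search terminates, and — because $m_k$ is the \emph{smallest} passing index — the rejected trial $\beta^{m_k-1}$ must exceed the threshold, forcing $\alpha_k=\beta^{m_k}\ge\beta(1-\sigma)/(M^2L\|g_k\|)$ in the damped regime (with the full step $\alpha_k=1$ as the only alternative). Using $\sigma\in(0,1/2)$, i.e. $1-\sigma>1/2$, this gives $\alpha_k\|g_k\|>\beta/(2M^2L)$.

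The decrease itself comes from reading the accepted-step instance of (\ref{inexactArmijo}) in the other direction. Converting $\eta_{k+1}\le(1-\sigma\alpha_k)\eta_k+B+\gamma$ back to true norms through (\ref{errorstep}) — the two $\gamma/2$ conversions plus the $+\gamma$ in the test accumulating to $2\gamma$ — produces $\|g_{k+1}\|\le\|g_k\|-\sigma\alpha_k\|g_k\|+B+2\gamma$. When the accepted step is damped one may instead plug the step lower bound into the clean descent recursion above, whose quadratic term is then controlled, obtaining a drop of at least $\tfrac12\alpha_k\|g_k\|-\alpha_k B>\beta/(4M^2L)-B$; invoking hypothesis (\ref{sizerr}), $B+2\gamma\le\beta/(16M^2L)$, collapses the additive error against the guaranteed progress and leaves $\|g_{k+1}\|\le\|g_k\|-\beta/(16M^2L)$.

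I expect the real work to be constant bookkeeping rather than any conceptual difficulty. Two points need care. First, the three separate $\gamma/2$-sized conversion errors must be tracked so that they line up with the single $2\gamma$ appearing in (\ref{sizerr}), and the geometric lower bound on $\alpha_k$ furnished by the minimality of $m_k$ has to be combined with the ranges $\sigma\in(0,1/2)$, $\beta\in(0,1)$ to reproduce precisely the factor $16$. Second, the case split between a full accepted step $\alpha_k=1$ and a strictly damped step is genuine: for a damped step the descent recursion already delivers the claimed progress, whereas a full step — which is exactly the regime $M^2L\alpha_k\|g_k\|>1/2$ forced by $\|g_k\|>1/(2M^2L)$ — must be handled through the $\sigma$-sufficient-decrease form of (\ref{inexactArmijo}); checking that this branch also clears $\beta/(16M^2L)$ is the step I would be most careful about.
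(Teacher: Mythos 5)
Your overall strategy is the same as the paper's: use Lemma \ref{basicrel} to certify that every sufficiently small trial step passes the inexact test (\ref{inexactArmijo}), use minimality of $m_k$ to lower-bound the accepted step so that $\alpha_k\|g_k\|\ge\beta(1-\sigma)/(M^2L)>\beta/(2M^2L)$ (the paper does the same thing with the explicit candidate $\bar\alpha_k=1/\bigl(2M^2L(\eta_k+\gamma/2)\bigr)$ and the conclusion $\alpha_k\ge\beta\bar\alpha_k$), and then convert the accepted instance of (\ref{inexactArmijo}) back to true norms at a cost of $2\gamma$. Up to that point your bookkeeping is correct, including the $\gamma/2+\gamma/2+\gamma=2\gamma$ accounting.

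The gap is in your final step for the damped branch. You propose to ``plug the step lower bound into the clean descent recursion,'' asserting that the quadratic term is then controlled and the drop is at least $\tfrac12\alpha_k\|g_k\|-\alpha_kB$. That requires the \emph{upper} bound $M^2L\,\alpha_k\|g_k\|\le 1/2$ on the accepted step, but minimality of $m_k$ gives only a \emph{lower} bound on $\alpha_k$: your sufficient condition for passing the test is sufficient, not necessary, so (\ref{inexactArmijo}) may accept a step far above the certified threshold and nothing then controls $M^2L\alpha_k^2\|g_k\|^2$. The only safe route is the other one you wrote, $\|g_{k+1}\|\le\|g_k\|-\sigma\alpha_k\|g_k\|+B+2\gamma$, whose decrease term is $\sigma\alpha_k\|g_k\|\ge\sigma\beta/(2M^2L)$; after subtracting $B+2\gamma\le\beta/(16M^2L)$ this clears the claimed $\beta/(16M^2L)$ only when $\sigma\ge 1/4$, not for all $\sigma\in(0,1/2)$. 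Be aware that the paper's own proof stumbles at exactly this spot: it reaches a decrease of $\sigma\beta\bar\alpha_k\|g_k\|\ge\sigma\beta/(4M^2L)$ and then asserts this is at least $\beta/(8M^2L)$ ``using $\sigma\in(0,1/2)$,'' which is the inequality in the wrong direction. The proposition is really only provable with a decrease constant proportional to $\sigma$ (e.g.\ $\sigma\beta/(8M^2L)$, with (\ref{sizerr}) tightened accordingly), so the difficulty is inherited from the statement rather than introduced by you; still, as written your second route does not close, and your first route closes only for $\sigma\ge 1/4$.
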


\begin{proof}{} For any $k\ge 0$, we define
$\bar{\a}_k = \frac{1}{2M^2 L (\eta_k +\g/2)}.$
In view of the condition on $\eta_k$ [cf.\ Eq.\ (\ref{errorstep})], we
have \begin{equation}\frac{1}{2M^2 L (\|g_k\| +\g)}\le \bar{\a}_k
\le \frac{1}{2M^2 L \|g_k\|}<1,\label{stepbd}\end{equation}\normalsize where
the last inequality follows by the assumption $\|g_k\|> {1\over 2
M^2 L}$. Using the preceding relation and substituting
$\a_k=\bar{\a}_k$ in the generalized descent lemma (lemma
\ref{basicrel}), we obtain:
\begin{eqnarray*}
\|g_{k+1}\|&\le &  \|g_k\|  + \bar{\a}_k \|\e_k\| + M^2 L\bar{\a}_k^2\|\e_k\|^2\\
&&-\bar{\a}_k\|g_k\|\Big(1- M^2 L \bar{\a}_k\|g_k\|\Big)\\
&\le & \|g_k\|  + \bar{\a}_k \|\e_k\| + M^2 L\bar{\a}_k^2\|\e_k\|^2\\
&&-\bar{\a}_k\|g_k\|\Big(1- M^2 L \frac{\|g_k\|}{2M^2 L\|g_k\|}\Big)\\
&\le &  \bar{\a}_k \e + M^2 L\bar{\a}_k^2\e^2+\Big(1-{\bar{\a}_k\over 2}\Big)\|g_k\|\\
&\le& B+\Big(1-{\bar{\a}_k\over 2}\Big)\|g_k\|,
\end{eqnarray*}\normalsize
where the second inequality follows from the definition of
$\bar{\a}_k$ and the third inequality follows by combining the facts
$\bar{\a}_k<1$, $\|\e_k\|\le \e$ for all $k$, and the definition of
$B$. The constant $\sigma$ used in the definition of the inexact
backtracking line search satisfies $\sigma\in (0,1/2)$, therefore,
it follows from the preceding relation that
$\|g_{k+1}\| \le  (1-\sigma \bar{\a}_k )\|g_k\| +B.\,$
Using condition (\ref{errorstep}) once again, this implies
$n_{k+1} \le (1-\sigma \bar{\a}_k )\eta_k +B +\g,\,$
showing that the steplength $\a_k$ selected by the inexact
backtracking line search satisfies $\a_k\ge \beta \bar{\a}_k$. From
condition (\ref{inexactArmijo}), we have $n_{k+1} \le (1-\sigma \a_k )\eta_k +B +\g,$ which implies
 $\|g_{k+1}\| \le  (1-\sigma \beta \bar{\a}_k )\|g_k\| +B+2\g.$
Combined with Eq.\ (\ref{stepbd}), this yields
\[\|g_{k+1}\| \le  \Big(1-\frac{\sigma \beta}{2M^2L (\|g_k\|+\g)}\Big)\|g_k\| +B+2\g.\]\normalsize
By \ref{sizerr}, we also have
$\g\le B+2\g \le  {\beta}/{16M^2 L},$
which in view of the assumption  $\|g_k\|> {1/ 2 M^2 L}$
implies that $\g\le \|g_k\|$. Substituting this  in the preceding
relation and using the fact $\a\in (0,1/2)$, we obtain
$\|g_{k+1}\| \le \|g_k\| -{\beta}/{8M^2L } +B+2\g.$
Combined with \ref{sizerr}, this yields the desired
result.
\end{proof}

Proposition \ref{globalresult} shows that if ADD-$N$ is initialized at a point with gradient norm $\|g_0\|> 1/2M^2L$ we obtain a decrease in the norm of the gradient of at least ${\beta}/{16M^2L }$. This holds true for all iterations as long as $\|g_k\|> 1/2M^2L$. This establishes that we need at most ${16\| g_0\|M^2 L}/{\beta}$ iterations until we obtain $\|g_k\|\le 1/2M^2L$. At this point the quadratic convergence result in Proposition \ref{mainresult} comes in effect and ADD-$N$ converges in a few extra steps.

\section{Consensus-based Newton}\label{cbm}

An alternative approach to obtain an approximation to the Newton step is to use consensus iterations. This consensus-based inexact Newton algorithm is pursued in \cite{cdc09} and shares a connection with the algorithm proposed here. While consensus-based Newton is developed for a primal dual formulation, it can be adapted to dual descent as pursued here. In consensus-based Newton, approximate Newton directions $d_k$ are found by solving the following consensus dynamic
\[d_k^{(i+1)} = (D_k+I)^{-1}(B_k+I)d_k^{(i)}-(D_k+I)^{-1}g_k,\]
where the splitting $H_k=(D_k+I)-(B_k+I)$ was used.

Observe that both consensus-based Newton and ADD use a choice of splitting.  ADD uses $D_k, B_k$ as opposed to $(D_k+I), (B_k+I)$ but the choice of splitting is not crucial.  With the appropriate choice of splitting, the consensus update becomes
\[ d^{(i+1)}_k =  D_k^{-1} B_k d_k^{(i)}- D_k^{-1}g_k.\]
Choosing the initial value $d_k^{(0)} = 0$ results in a sequence of approximations of the Newton direction as follows
\begin{align*}
d_k^{(1)} &=  D_k^{-1} B_k 0 - D_k^{-1}g_k= -D_k^{-1}g_k = -\bar H_k^{(0)}g_k\\
d_k^{(2)} &=  D_k^{-1} B_k (-D_k^{-1}g_k) - D_k^{-1}g_k = -\bar H_k^{(1)} g_k\\
&\vdots& \\
d_k^{(m)} &= - \sum_{i=0}^{m-1} {D}_k^{-\frac{1}{2}} ({D}_k^{-\frac{1}{2}} {B_k} {D}_k^{-\frac{1}{2}})^i {D}_k^{-\frac{1}{2}}  g_k = -\bar H_k^{(m-1)} g_k
\end{align*}
We observe that after $m$ consensus iterations our approximation $d_k^{(m)}$ is the same approximation arrived at by using ADD-N with $N= m-1$.  Therefore, ADD has the same behavior as a fixed iteration version of consensus-based Newton.

\begin{figure}
\includegraphics[width=\columnwidth]{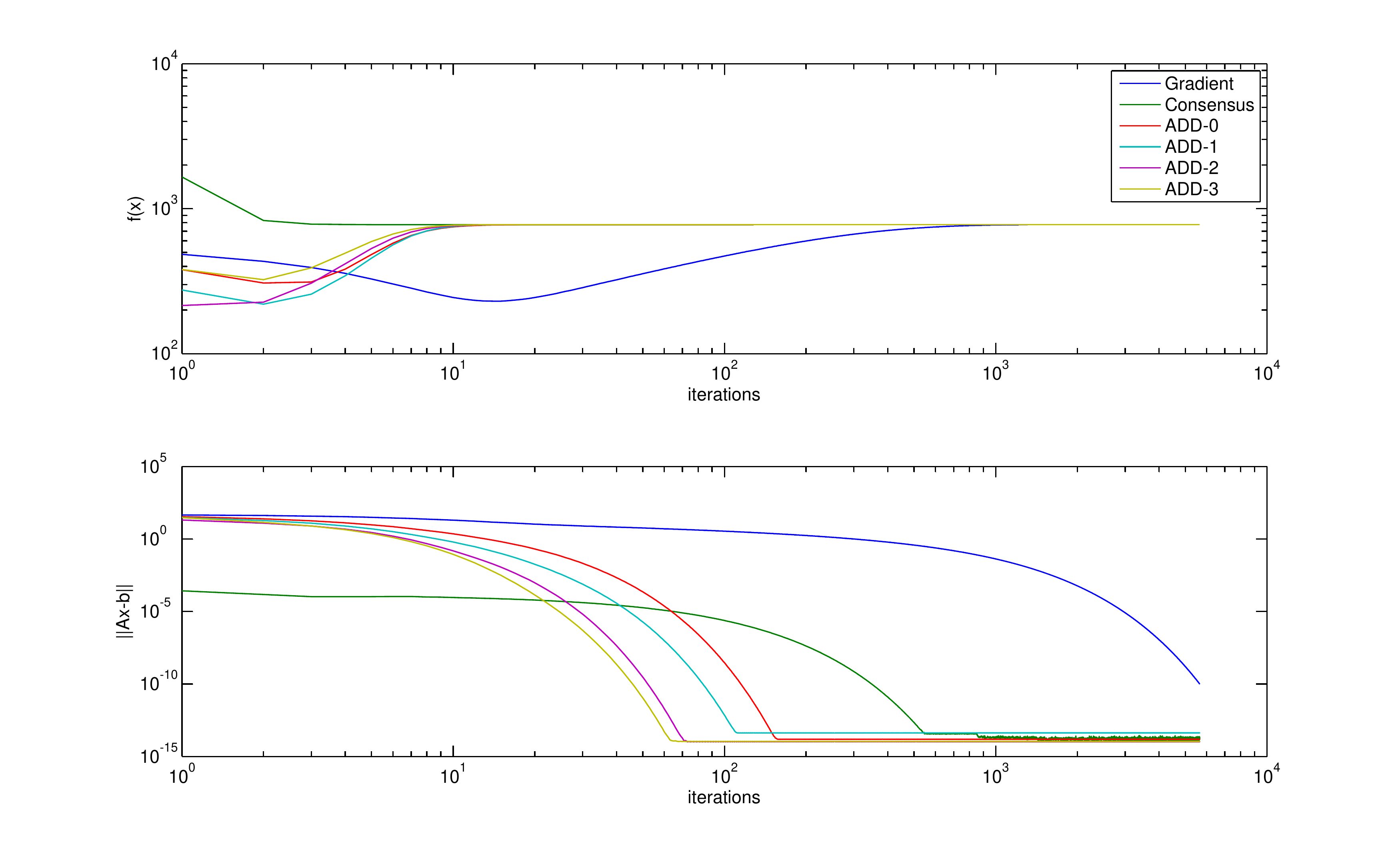}
\centering
\caption{\label{ex1}Primal objective (top), $f(x_k)$ and primal feasibility (bottom), $\|Ax_k-b\|$ with respect to dual descent iterations for a sample network optimization problem with 25 nodes and 75 edges.  ADD converges two orders of magnitude faster than gradient descent.  Increasing N reduces the number of iterations required to converge.}
\end{figure}

\begin{figure}
\includegraphics[width=\columnwidth]{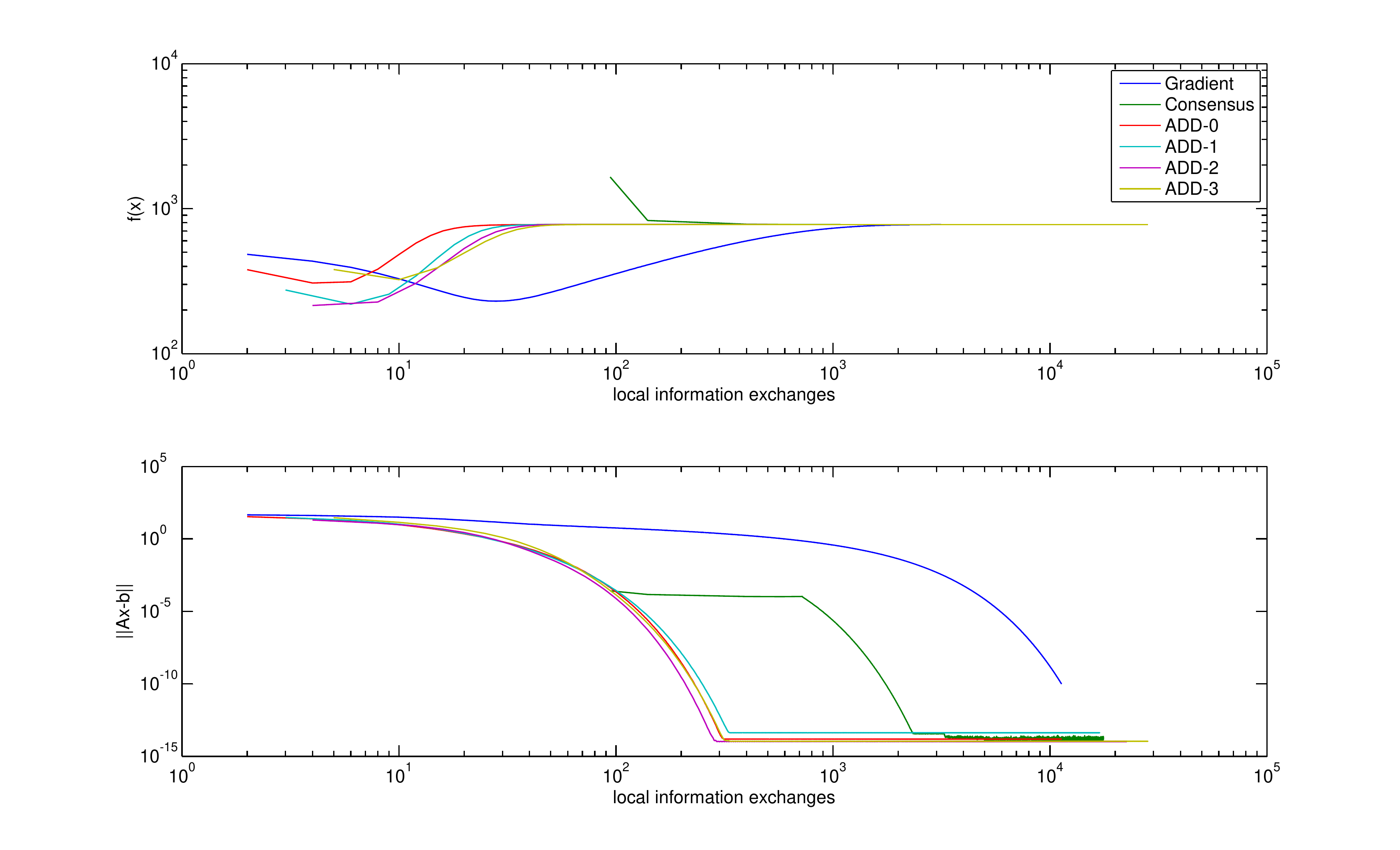}
\centering
\caption{\label{ex2}Primal objective (top), $f(x_k)$ and primal feasibility (bottom), $\|Ax_k-b\|$ with respect to number of local information exchanges for a sample network optimization problem with 25 nodes and 75 edges. ADD converges an order of magnitude faster than consensus-based Newton and two orders of magnitude faster than gradient descent.}
\end{figure}

\section{Numerical results}

Numerical experiments are undertaken to study ADD's performance with respect to the choice of the number of approximating terms $N$. These experiments show that $N=1$ or $N=2$ work best in practice. ADD is also compared to dual gradient descent \cite{ratejournal} and the consensus-based Newton method in \cite{cdc09} that we summarized in Section \ref{cbm}. These comparisons show that ADD convergence times are one to two orders of magnitude faster.

Figures \ref{ex1} and \ref{ex2} show convergence metrics for a randomly generated network with 25 nodes and 75 edges. Edges in the network are selected uniformly at random.  The flow vector $b$ is chosen to place sources and sinks a full diam$(\mathcal{G})$ away from each other. All figures show results for ADD-0 through ADD-3, gradient descent, and consensus-based Newton. In Fig. \ref{ex1}, objective value $f[x(\l_{k})]$ and constraint violation $\|Ax(\l_{k})-b\|$ are shown as functions of the iteration index $k$. As expected, the number of iterations required decreases with increasing $N$. The performance improvement, however, is minimal when increasing $N$ from 2 to 3. The convergence time of consensus-based Newton is comparable to ADD, while gradient descent is three orders of magnitude slower.


The comparison in Fig. \ref{ex1} is not completely accurate because different versions of ADD differ in the number of communication instances required per iteration. These numbers differ for consensus-based Newton and regular dual descent as well. Fig. \ref{ex2} normalizes the horizontal axis to demonstrate algorithms' progress with respect to the number of times nodes exchange information with their neighbors. Observe that in terms of this metric all versions of ADD are about an order of magnitude faster than consensus-based Newton and two orders orders of magnitude faster than gradient descent. The difference between Figs. \ref{ex1} and \ref{ex2} is due to the number of communication instances needed per each Newton iteration.


\begin{figure}
\includegraphics[width=\columnwidth]{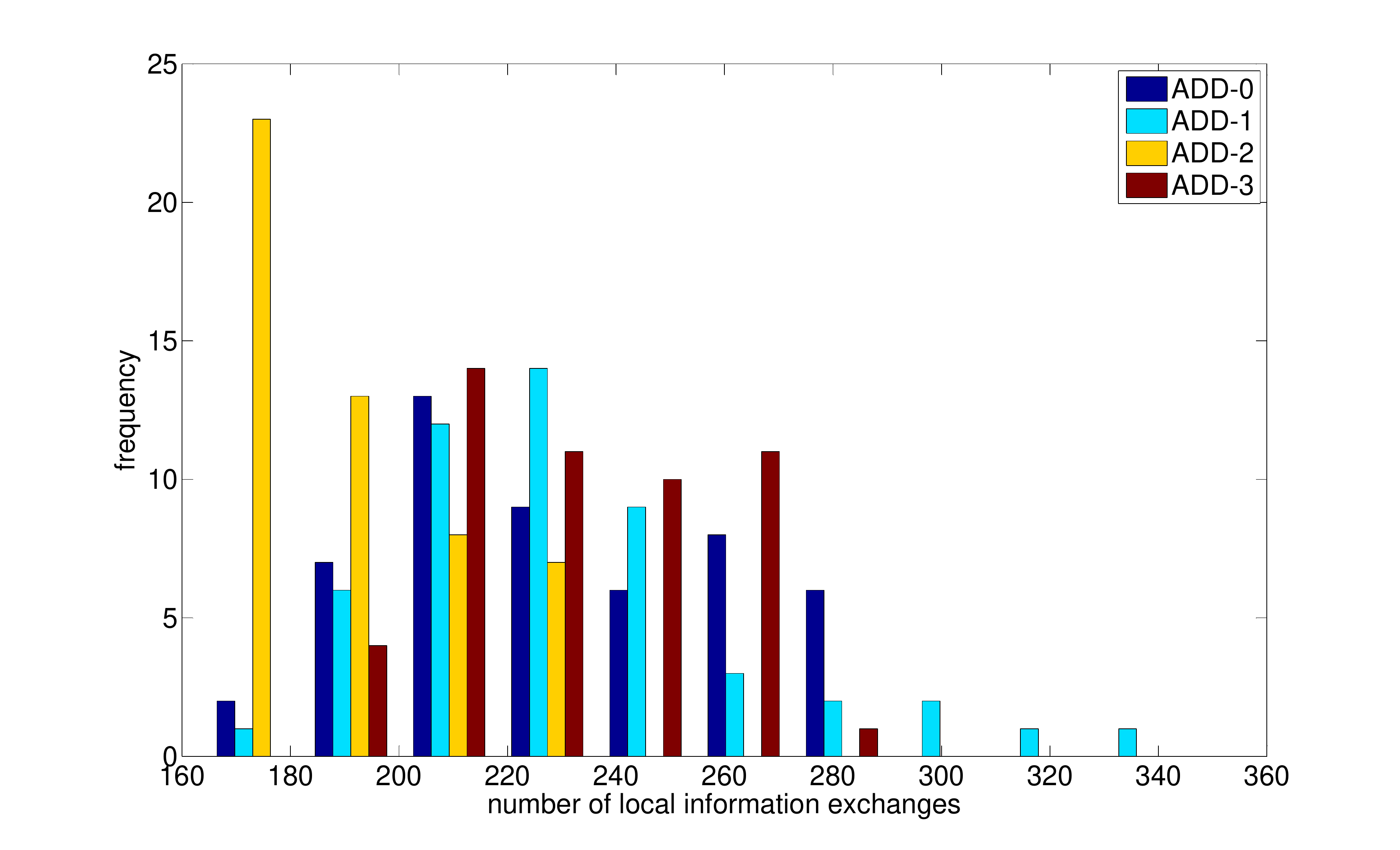}
\centering
\caption{\label{data1}Histogram of the number of local communications required to reach $\|g(\l_k)\|\le 10^{-10}$ for ADD-N with respect to parameter N, for 50 trials of the network optimization problem on random graphs with 25 nodes and 75 edges.  ADD-2 is shown to be the best on average by about $10\%$ indicating that with respect to communication cost, larger $N$ is not necessarily better.}
\end{figure}
Another important conclusion of Fig. \ref{ex2} is that even though increasing $N$ in ADD decreases the number of iterations required, there is not a strict decrease in the number of communications. Indeed, as can be appreciated in Fig. \ref{ex2},  ADD-2  requires fewer communications than ADD-3. This fact demonstrates an inherent trade off between spending communication instances to refine the Newton step $d_{k}^{(N)}$ versus using them to take a step. We further examine this phenomenon in Fig. \ref{data1}.  These experiments are on random graphs with 25 nodes and 75 edges chosen uniformly at random.  The flow vector $b$ is selected by placing a source and a sink at diam$(\mathcal{G})$ away from each other. We consider an algorithm to have converged when its residual $\|g_k\| \le 10^{-10}.$



The behavior of ADD is also explored for graphs of varying size and degree in Fig. \ref{data3}. As the graph size increases the performance gap between ADD and competing methods increases. Consistency of ADD is also apparent since the maximum, minimum, and average information exchanges required to solve (\ref{optnet}) for different network realizations are similar. This is not the case for neither consensus-based Newton nor gradient descent. Further note that ADD's communication cost increases only slightly with network size.

\begin{figure}
\includegraphics[width=\columnwidth]{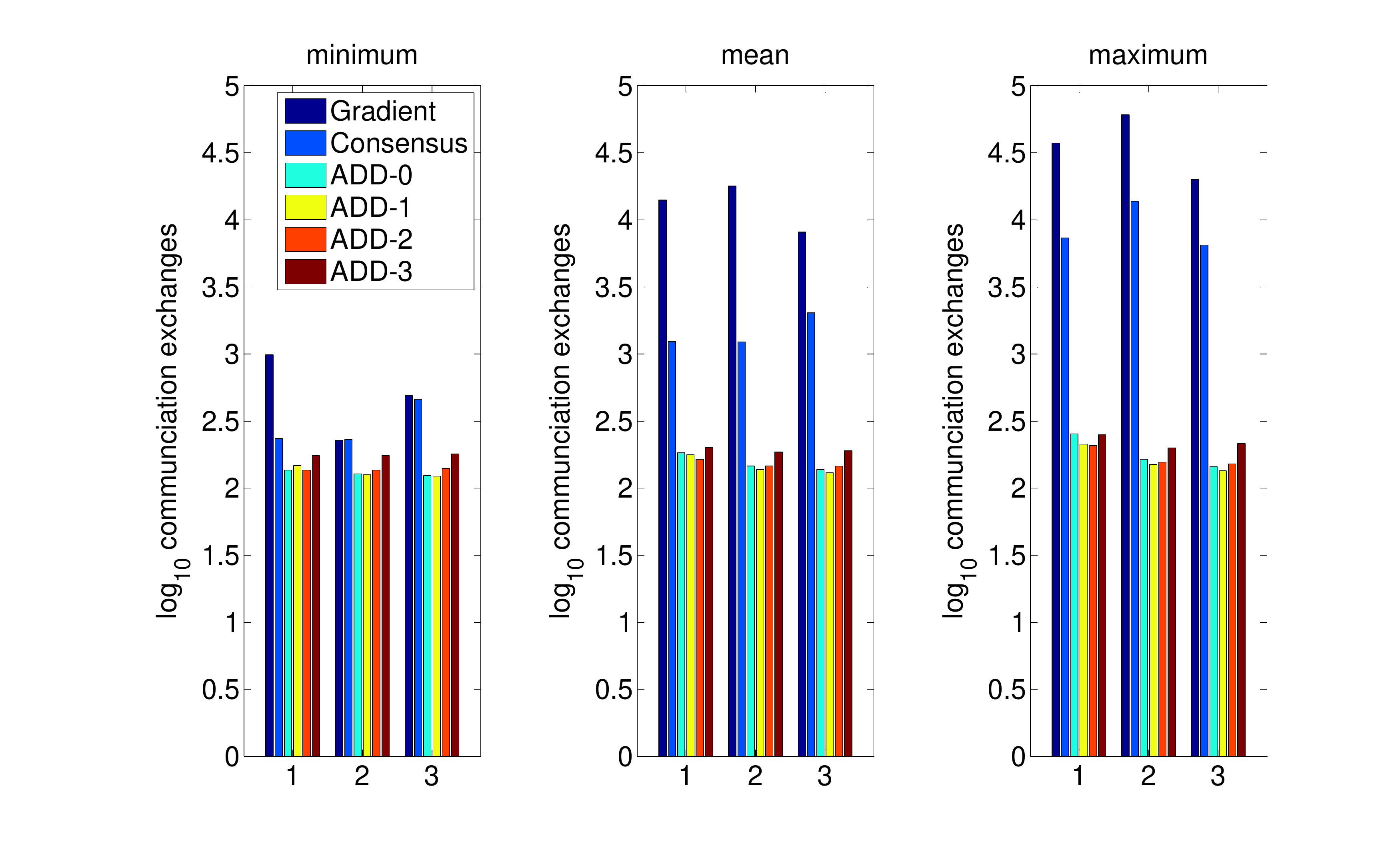}
\centering
\caption{\label{data3} Min (left), mean (center) and max (right) number of local communications required to reach $\|g(\l_k)\|\le 10^{-10}$ for gradient descent, consensus-based Newton and ADD, computed for 35 trials each on random graphs with 25 nodes and 75 edges(1), 50 nodes and 350 edges(2), and 100 nodes and 1000 edges (3).  The min and max are on the same order of magnitude for ADD, demonstrating small variance.  }
\end{figure}

\section{Conclusion}

A family of accelerated dual descent (ADD) algorithms to find optimal network flows in a distributed manner was introduced. Members of this family are characterized by a single parameter $N$ determining the accuracy in the approximation of the dual Newton step. This same parameter controls the communication cost of individual algorithm iterations. We proved that it is always possible to find members of this family for which convergence to optimal operating points is quadratic.

Simulations demonstrated that $N=1$ and $N=2$, respectively denoted as ADD-1 and ADD-2 perform best in practice. ADD-1 corresponds to Newton step approximations using information from neighboring nodes only, while ADD-2 requires information from nodes two hops away. ADD-1 and ADD-2 outperform gradient descent by two orders of magnitude and a related consensus-based Newton method by one order of magnitude.

Possible extensions include applications to network utility maximization \cite{wei}, general wireless communication problems \cite{wireless}, and stochastic settings \cite{eso}.

\bibliographystyle{amsplain}
\bibliography{distributed}
\end{document}